\documentclass[11pt]{amsart}

\usepackage{amsmath,amssymb,booktabs,array}
\usepackage[T1]{fontenc}
\usepackage{lmodern}
\usepackage{microtype}
\usepackage{xcolor}
\usepackage{hyperref}

\hypersetup{colorlinks=true,linkcolor=blue!55!black,urlcolor=blue!55!black}

\newtheorem{theorem}{Theorem}[section]
\newtheorem{proposition}[theorem]{Proposition}

\newcommand{\N}{\mathbb N}
\newcommand{\ind}{\mathbf 1}
\newcommand{\rankapp}{z}

\title[The Fibonacci numbers are not an AU set]
{The Fibonacci numbers are not an additive uniqueness set for multiplicative functions}
\author{Poo-Sung Park}
\address{Department of Mathematics Education, Kyungnam University,
Changwon 51767, Republic of Korea}
\email{pspark@kyungnam.ac.kr}
\subjclass[2020]{Primary 11A25; Secondary 11B39}
\keywords{additive uniqueness, multiplicative function, Fibonacci number,
rank of apparition, Pisano period}
\date{September 8, 2026}

\begin{document}
\begin{abstract}
Let $(F_n)_{n\geq 0}$ be the Fibonacci sequence.  We show that a
multiplicative function $f$ satisfying
\[
 f(F_n+F_m)=f(F_n)+f(F_m)\qquad(n,m\geq 1)
\]
need not be the identity function, even when $f$ takes positive integer
values.  This answers negatively a question posed by Spiro in 1992.
The smallest example presented here is obtained from
$F_{31}=557\cdot 2417$.  We prove the required divisibility equivalence,
formulate an abstract prime-signature construction, and give a practical
criterion producing further examples.  We also describe the AI-assisted
search that led to the construction and provide a reproducible certificate
checker.
\end{abstract}
\maketitle
\raggedbottom

\section{Introduction}

Let $\mathcal M$ denote the class of multiplicative functions
$f:\mathbb N\to\mathbb C$.  A set $E\subseteq\mathbb N$ is called an
\emph{additive uniqueness set}, or an \emph{AU set}, for $\mathcal M$ if the
condition
\[
 f(a+b)=f(a)+f(b)\qquad(a,b\in E)
\]
determines $f\in\mathcal M$ uniquely.  Since the identity function always
satisfies this condition, uniqueness means that $f(n)=n$ for every
$n\in\mathbb N$.

Spiro introduced this viewpoint in 1992 in her foundational paper
\cite{Spiro1992}.  Among other results, she proved that the set of primes is
an AU set for multiplicative functions that do not vanish at every prime.
In the same paper she singled out the set of positive Fibonacci numbers and
asked whether it is an AU set.  The question is especially natural because
the Fibonacci recurrence immediately makes the condition very rigid on the
sequence itself.  Indeed, if
\begin{equation}\label{eq:original-condition}
 f(F_n+F_m)=f(F_n)+f(F_m)\qquad(n,m\geq1),
\end{equation}
then elementary arguments give $f(F_n)=F_n$ for all $n$.  Nevertheless, this
does not by itself determine the values of $f$ away from the Fibonacci
sequence.

Despite substantial progress on AU sets formed by primes, polygonal numbers,
and related sequences, Spiro's Fibonacci question remained open for more than
three decades; it was still listed as an open problem in a 2025 survey
\cite[Section~4.1]{Park2025}.  The same survey records the partial result that
$f(2^a)=2^a$ for every $a\geq0$.  One may also readily deduce
$f(L_n)=L_n$ for the Lucas numbers.  These facts suggested that the identity
function might be forced, but they do not rule out a deformation supported
on primes that are indistinguishable by Fibonacci sums.

The purpose of this note is to answer Spiro's question negatively.  We give
an explicit positive integer-valued multiplicative function that satisfies
\eqref{eq:original-condition} but is not the identity.  To the best of the
author's knowledge, this is the first counterexample to the Fibonacci AU
problem.  The key observation is that the two primes
\[
 557\quad\text{and}\quad2417,
\]
which are the prime factors of $F_{31}$, have exactly the same divisibility
pattern on every Fibonacci number and every sum of two Fibonacci numbers.
This permits their multiplicative contributions to be exchanged without
altering any value tested by \eqref{eq:original-condition}.

The paper is organized as follows.  Section~\ref{sec:explicit} proves the
explicit counterexample.  Section~\ref{sec:signature} isolates the general
prime-signature mechanism, and Section~\ref{sec:criterion} gives a finite
modular criterion for producing such prime pairs.  Further examples are
listed in Section~\ref{sec:further}, and Section~\ref{sec:ai} records the
AI-assisted discovery and exact computational verification.

\section{The explicit counterexample}\label{sec:explicit}

We use the normalization
\[
 F_0=0,\qquad F_1=1,\qquad F_{n+2}=F_{n+1}+F_n.
\]
A function $f:\N\to\mathbb C$ is called multiplicative if $f(1)=1$ and
$f(ab)=f(a)f(b)$ whenever $\gcd(a,b)=1$.  The condition under consideration
is
\begin{equation}\label{eq:additivity}
 f(F_n+F_m)=f(F_n)+f(F_m)\qquad(n,m\geq1).
\end{equation}

Set
\[
 p=557,\qquad q=2417;\qquad F_{31}=1346269=pq.
\]
Define $f:\N\to\N$ by
\begin{equation}\label{eq:counterexample}
 f(N)=N\left(\frac qp\right)^{\ind_{p\mid N}-\ind_{q\mid N}},
\end{equation}
where $\ind_{d\mid N}$ is $1$ if $d\mid N$ and $0$ otherwise.  Equivalently,
\[
f(N)=
\begin{cases}
 (q/p)N,&p\mid N,\ q\nmid N,\\[1mm]
 (p/q)N,&q\mid N,\ p\nmid N,\\[1mm]
 N,&\text{otherwise}.
\end{cases}
\]
The quotients in the first two cases are positive integers.  Moreover, for
coprime $A,B$ one has
\[
 \ind_{p\mid AB}=\ind_{p\mid A}+\ind_{p\mid B},
 \qquad
 \ind_{q\mid AB}=\ind_{q\mid A}+\ind_{q\mid B}.
\]
Consequently, $f$ is multiplicative.  It is not the identity, since
\[
 f(557)=2417,\qquad f(2417)=557.
\]

The entire construction rests on the following fact.

\begin{theorem}\label{thm:key31}
For every $n,m\geq1$,
\[
 557\mid F_n+F_m\quad\Longleftrightarrow\quad
 2417\mid F_n+F_m.
\]
\end{theorem}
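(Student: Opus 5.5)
The plan is to reduce the statement to a finite verification modulo each prime. The reduction uses the fact that $557$ and $2417$ have the same rank of apparition, $\rankapp(557)=\rankapp(2417)=31$. Both primes divide $F_{31}$, and $31$ is prime, so the rank is $31$ in each case.

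\emph{Step 1 (shift identity).} Start from the addition formula $F_{n+k}=F_kF_{n+1}+F_{k-1}F_n$ with $k=31$. Modulo $\ell\in\{p,q\}$ it gives
\[
F_{n+31}\equiv c_\ell F_n \pmod{\ell},\qquad c_\ell:=F_{30}\bmod \ell .
\]
Cassini's identity $F_{29}F_{31}-F_{30}^2=(-1)^{30}$ gives $c_\ell^2\equiv -1\pmod{\ell}$. Hence $c_\ell$ is a primitive fourth root of unity mod $\ell$, and the Pisano period is $124$. Write $n=31a+r$ and $m=31b+s$ with $0\le r,s\le 30$. Then
\[
F_n+F_m\equiv c_\ell^{a}\bigl(F_r+c_\ell^{\,b-a}F_s\bigr)\pmod{\ell}.
\]
So $\ell\mid F_n+F_m$ if and only if $F_r\equiv -c_\ell^{\,k}F_s=c_\ell^{\,k+2}F_s\pmod{\ell}$, where $k\equiv b-a\pmod 4$.

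The case $r=0$ or $s=0$ is immediate. Indeed $F_r\equiv0$ forces $F_s\equiv 0$, i.e.\ $s=0$ (since $\rankapp(\ell)=31$), and this criterion does not depend on $\ell$. The theorem is therefore equivalent to the following finite statement. For $1\le r,s\le 30$ and $j\in\mathbb Z/4$,
\[
F_r\equiv c_p^{\,j}F_s\pmod p\iff F_r\equiv c_q^{\,j}F_s\pmod q .
\]

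\emph{Step 2 (trivial coincidences).} Next I would identify the ``structural'' solutions, which hold for both primes simultaneously. These include $r=s$ with $j=0$. They also include the reflections coming from $F_{-r}=(-1)^{r+1}F_r$ combined with the shift identity, which give $F_{31-r}\equiv c_\ell(-1)^{r+1}F_r$. Since $c_\ell^2=-1$, the sign $(-1)^{r+1}$ is itself a power of $c_\ell$. So every pair $s\in\{r,31-r\}$ yields a solution with a specific exponent $j$, and this $j$ is computed by the same formula for $p$ and for $q$.

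It would then remain to show that there are no other solutions mod $p$ and none mod $q$. Concretely, the $30$ residues $F_1,\dots,F_{30}$, multiplied by the fourth roots of unity, must coincide only as forced by $r\leftrightarrow 31-r$.

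\emph{Step 3 (the obstacle).} The main difficulty is excluding accidental coincidences $F_r\equiv \zeta F_s$ with $s\notin\{r,31-r\}$, or with the wrong root of unity $\zeta$. I see no structural reason for this; it is presumably exactly why $557$ and $2417$ work while other prime pairs do not. I would settle it by direct computation. For each prime, list $F_r \bmod \ell$ for $1\le r\le 30$, form the set of $120$ values $\pm F_r,\ \pm c_\ell F_r$, and check the collision pattern. This amounts to at most $30\cdot30\cdot4$ comparisons per prime, which is small enough to record as an explicit table or certificate in the paper.

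A cleaner route, which I would try first, is to work in $\mathbb F_{\ell^2}$ with the Binet formula. Both primes are $\equiv 2\pmod 5$, hence inert. There $F_r=(\alpha^r-\beta^r)/\sqrt5$, where $\alpha\beta=-1$ and $\alpha$ has order dividing $124$. A relation $F_r=\zeta F_s$ then becomes a polynomial relation in $\alpha$. Any solution beyond the structural ones would be a nontrivial polynomial identity that $\alpha$ satisfies modulo $\ell$. One could then hope to prove that $p$ and $q$ divide the relevant resultants in exactly the same cases. If this does not close quickly, I would fall back on the finite computation.
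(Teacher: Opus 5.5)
Your reduction is correct and is essentially the paper's. You have $c_\ell\equiv F_{30}\equiv F_{32}$ with $c_\ell^2\equiv-1$, the shift $F_{n+31}\equiv c_\ell F_n$, and the reflection $F_{31-r}\equiv(-1)^{r+1}c_\ell F_r$. The finite biconditional over $1\le r,s\le30$, $j\in\mathbb Z/4\mathbb Z$ at the end of Step~1 is a valid equivalent form of the theorem.

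The flaw is in Step~2. Your list of structural coincidences is incomplete, so the target ``the residues coincide only as forced by $r\leftrightarrow31-r$'' is false for both primes. Because $F_1=F_2$ and $F_{29}\equiv-F_{30}$, the indices $\{1,2,29,30\}$ form a single class. Modulo each $\ell$ one has $F_1\equiv F_2$, $F_{30}\equiv-F_{29}$ and $F_1\equiv c_\ell F_{29}$, and none of these has $s\in\{r,31-r\}$. These collisions are harmless: they come from integer identities and carry the same exponent $j$ for $557$ and $2417$. But your check as stated would report them as accidental and fail. The fix is to normalize as the paper does, to representatives $2\le i\le15$. The classes are then $\{1,2,29,30\}$ and $\{i,31-i\}$ for $3\le i\le15$.

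Second, the proof is not complete until the finite check is actually carried out; Step~3 only promises it. The paper makes this step short. Write $F_n\equiv c_\ell^{e(n)}F_{i(n)}$ with $2\le i(n)\le15$, where $(i(n),e(n))$ depends only on $n\bmod124$. You then need $F_i\not\equiv\zeta F_{i'}$ for $i\ne i'$ and $\zeta\in\{\pm1,\pm c_\ell\}$. Since these four values are all the fourth roots of unity in $\mathbb F_\ell$, this is equivalent to the residues $F_i^4\bmod\ell$, $2\le i\le15$, being pairwise distinct. That is fourteen residues per prime, and the paper tabulates them.

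Then $\ell\mid F_n+F_m$ with $31\nmid nm$ forces $i(n)=i(m)$ and $e(n)-e(m)\equiv2\pmod4$, a condition independent of $\ell$. Your $120$-value collision table would also work once the class $\{1,2,29,30\}$ is accounted for. The Binet/resultant route is unnecessary.
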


We prove a more general criterion in Section~\ref{sec:criterion}.  Applying
the theorem first to a single Fibonacci number and then to a sum gives
\[
 f(F_n)=F_n,
 \qquad f(F_n+F_m)=F_n+F_m.
\]
Thus \eqref{eq:additivity} holds, while $f$ is not the identity.  Notice also
that the same example satisfies $f(L_n)=L_n$ for every Lucas number $L_n$ and
$f(2^a)=2^a$ for every $a\geq0$.  For $n\geq2$ this follows from
$L_n=F_{n-1}+F_{n+1}$; the remaining initial case is immediate.

\section{A general prime-signature construction}\label{sec:signature}

The mechanism is not peculiar to Fibonacci numbers.  Let $\mathcal A$ be a
set of positive integers and associate with each prime $\ell$ its divisibility
signature on $\mathcal A$,
\[
 \sigma_{\ell,\mathcal A}(A)=\ind_{\ell\mid A}\qquad(A\in\mathcal A).
\]

\begin{proposition}[signature switching]\label{prop:signature}
Suppose that distinct primes $p$ and $q$ have the same signature on
$\mathcal A$.  Then, for every $t\in\mathbb C^\times$,
\[
 f_t(N)=N t^{\ind_{p\mid N}-\ind_{q\mid N}}
\]
is multiplicative and fixes every member of $\mathcal A$.  If $t\ne1$, it is
not the identity.  In particular, the choice $t=q/p$ gives a positive
integer-valued function.
\end{proposition}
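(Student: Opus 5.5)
The plan is to verify the four assertions of Proposition~\ref{prop:signature} in turn: multiplicativity, that $f_t$ fixes $\mathcal A$, non-triviality for $t\neq1$, and integrality for $t=q/p$. Each is a direct check, and I would follow the pattern used for the explicit example in Section~\ref{sec:explicit}.

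For \emph{multiplicativity}, first $f_t(1)=1$, because $p\nmid 1$ and $q\nmid 1$. Now let $\gcd(A,B)=1$. A prime divides $AB$ if and only if it divides exactly one of $A$ and $B$. Hence
\[
\ind_{p\mid AB}=\ind_{p\mid A}+\ind_{p\mid B},\qquad \ind_{q\mid AB}=\ind_{q\mid A}+\ind_{q\mid B}.
\]
So the exponent $\ind_{p\mid N}-\ind_{q\mid N}$ is additive over coprime factors, and $N\mapsto N$ is completely multiplicative. Multiplying the two facts gives $f_t(AB)=f_t(A)f_t(B)$. The hypothesis $t\neq0$ is only needed so that negative exponents make sense.

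For the \emph{fixing} property, take $A\in\mathcal A$. Equal signatures mean $\ind_{p\mid A}=\ind_{q\mid A}$, so the exponent vanishes and $f_t(A)=A$. For \emph{non-triviality}, note that $f_t(p)=pt\neq p$ whenever $t\ne1$.

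For \emph{integrality} with $t=q/p$, split into three cases on $N$:
\begin{itemize}
\item If $p\mid N$ and $q\nmid N$, the exponent is $1$ and $f_t(N)=(N/p)\,q$, a positive integer.
\item If $q\mid N$ and $p\nmid N$, the exponent is $-1$ and $f_t(N)=(N/q)\,p$, also a positive integer.
\item Otherwise the exponent is $0$ and $f_t(N)=N$.
\end{itemize}

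I expect no real obstacle. The only point needing care is that $f_t$ fixes $\mathcal A$, but not every element of the additive closure of $\mathcal A$. In the application one must therefore take $\mathcal A$ to be the set of Fibonacci numbers together with the sums $F_n+F_m$, which is exactly what Theorem~\ref{thm:key31} provides.
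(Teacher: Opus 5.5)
Your proof is correct and follows the paper's own argument. Like the paper, you get multiplicativity from the additivity of the two divisibility indicators on coprime products, you get that $\mathcal A$ is fixed because equal signatures make the exponent vanish, and you get integrality at $t=q/p$ from the case split; you also state $f_t(1)=1$ and the witness $f_t(p)=pt\neq p$, which the paper leaves implicit.
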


\begin{proof}
Multiplicativity follows from additivity of the two divisibility indicators
on coprime products.  Equality of signatures makes the exponent zero on every
$A\in\mathcal A$.  For $t=q/p$, the only potentially nontrivial values are
$(q/p)N$ with $p\mid N$ and $(p/q)N$ with $q\mid N$, so the values are positive
integers.
\end{proof}

For the present problem take
\[
 \mathcal A=\{F_n:n\geq1\}\cup
 \{F_n+F_m:n,m\geq1\}.
\]
Thus the search for counterexamples becomes the search for two primes with
identical signatures on this set.  Proposition~\ref{prop:signature} also shows
that one successful pair produces infinitely many complex-valued
counterexamples.  Several disjoint successful pairs can be switched
independently.

\section{A Fibonacci criterion}\label{sec:criterion}

For a prime $\ell$, let $\rankapp(\ell)$ denote its \emph{rank of apparition},
that is, the least positive $r$ for which $\ell\mid F_r$.

\begin{theorem}\label{thm:criterion}
Let $r\geq5$ be odd, and let $p$ and $q$ be distinct odd primes.  Suppose that
\begin{enumerate}
\item $\rankapp(p)=\rankapp(q)=r$;
\item for each $\ell\in\{p,q\}$, the residues
\begin{equation}\label{eq:fourth-residues}
 F_i^4\pmod\ell,\qquad 2\leq i\leq (r-1)/2,
\end{equation}
are pairwise distinct.
\end{enumerate}
Then, for all $n,m\geq1$,
\[
 p\mid F_n+F_m\quad\Longleftrightarrow\quad q\mid F_n+F_m.
\]
Consequently, \eqref{eq:counterexample}, with this $p,q$, is a positive
integer-valued counterexample to \eqref{eq:additivity}.
\end{theorem}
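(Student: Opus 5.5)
The plan is to show that, for a prime $\ell$ with odd rank of apparition $r$, whether $\ell\mid F_n+F_m$ is decided by a rule that involves only $n$, $m$ and $r$, and not $\ell$ itself. Since $p$ and $q$ share the same $r$, they then have the same signature on $\mathcal A=\{F_n\}\cup\{F_n+F_m\}$, and Proposition~\ref{prop:signature} gives the final "Consequently" clause. For single Fibonacci numbers this is already clear: $\ell\mid F_n$ if and only if $r\mid n$, so $p$ and $q$ agree on every $F_n$.

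\textbf{Step 1 (period structure).} Put $c=F_{r+1}\bmod \ell$. Since $F_r\equiv0$, we get $F_{r-1}\equiv F_{r+1}$, and Cassini's identity $F_{r-1}F_{r+1}-F_r^2=(-1)^r=-1$ gives $c^2\equiv-1$. Because $\ell$ is odd, $c$ has multiplicative order exactly $4$. The addition formula $F_{n+r}=F_{r}F_{n+1}+F_{r-1}F_n$ gives $F_{n+r}\equiv cF_n$. Combining this with $F_{-i}=(-1)^{i+1}F_i$, I get $F_{r-i}\equiv c\,(-1)^{i+1}F_i$, and $-1\equiv c^2$. Hence every $n$ determines an index $i(n)\in\{0,1,\dots,(r-1)/2\}$ and an exponent $e(n)\in\mathbb Z/4$ such that
\[
F_n\equiv c^{e(n)}F_{i(n)}\pmod \ell .
\]
The key point is that $i(n)$ and $e(n)$ are given by explicit formulas in $n$ and $r$ alone: write $n=kr+j$, and replace $j$ by $r-j$ if $j>(r-1)/2$. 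In particular, $i(n)=0$ exactly when $r\mid n$.

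\textbf{Step 2 (reduction to fourth powers).} First, if exactly one of $i(n),i(m)$ is $0$, then $\ell\nmid F_n+F_m$, because $F_i\not\equiv0$ for $1\le i<r$. If both are $0$, then $\ell$ divides the sum. Otherwise $\ell\mid F_n+F_m$ is equivalent to
\[
F_{i(n)}\equiv c^{\,2+e(m)-e(n)}F_{i(m)} \pmod\ell .
\]
Raising to the fourth power gives the necessary condition $F_{i(n)}^4\equiv F_{i(m)}^4$. By hypothesis (2), together with $F_1=F_2=1$, this holds exactly when $i(n)$ and $i(m)$ coincide after identifying the indices $1$ and $2$. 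In that case $F_{i(n)}=F_{i(m)}$ as integers, and it is a nonzero residue. The congruence then reduces to $c^{2+e(m)-e(n)}\equiv1$, which holds if and only if $e(n)-e(m)\equiv2\pmod4$, since $c$ has order exactly $4$.

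\textbf{Step 3 (conclusion).} The resulting criterion reads: either $r\mid n$ and $r\mid m$, or $\{i(n),i(m)\}$ are equivalent and $e(n)-e(m)\equiv 2\pmod 4$. It depends only on $(n,m,r)$, so it is identical for $p$ and $q$. This proves the equivalence.

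The main obstacle is Step 1. It requires careful bookkeeping of the reflection $j\mapsto r-j$ and of the sign $(-1)^{i+1}$, so that $e(n)$ is genuinely independent of $\ell$. The choice of $c$ differs between $p$ and $q$, but only through its order-$4$ property, and that is all the argument uses. The hypothesis $r\ge5$ only ensures that the index range $2\le i\le (r-1)/2$ in \eqref{eq:fourth-residues} is nonempty, so that $F_2^4=1$ is included among the compared residues; for larger indices, distinctness from $1$ is then part of hypothesis (2).
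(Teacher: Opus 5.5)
Your proposal is correct and follows the paper's proof essentially step for step. The steps match: the shift $F_{n+r}\equiv cF_n$ with $c=F_{r+1}$ of order four; the reflection $F_{r-i}\equiv(-1)^{i+1}cF_i$, giving an $\ell$-independent normal form $F_n\equiv c^{e(n)}F_{i(n)}$; fourth powers plus hypothesis (2) to force equal indices; and finally $e(n)-e(m)\equiv 2\pmod 4$. The only difference is bookkeeping: you keep the indices $0$ and $1$ in the normal form and identify $1$ with $2$ at the fourth-power step, whereas the paper normalizes directly to $2\le i(n)\le (r-1)/2$ and treats $r\mid n$ as a separate case.
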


\begin{proof}
Fix $\ell\in\{p,q\}$ and put
\[
 c_\ell\equiv F_{r+1}\pmod\ell.
\]
Since $F_r\equiv0\pmod\ell$, the addition formula and Cassini's identity give
\begin{equation}\label{eq:shift}
 F_{n+r}\equiv c_\ell F_n\pmod\ell,
 \qquad c_\ell^2\equiv-1\pmod\ell.
\end{equation}
In particular, $c_\ell$ has order four.  A second application of the addition
formula gives
\begin{equation}\label{eq:reflection}
 F_{r-i}\equiv(-1)^{i+1}c_\ell F_i\pmod\ell.
\end{equation}

If $r\nmid n$, reduce $n$ modulo $r$ and use
\eqref{eq:shift}--\eqref{eq:reflection}.  Since $F_1=F_2=1$, this yields a
representation
\begin{equation}\label{eq:normal-form}
 F_n\equiv c_\ell^{e(n)}F_{i(n)}\pmod\ell,
 \qquad
 2\leq i(n)\leq\frac{r-1}{2},\quad e(n)\in\mathbb Z/4\mathbb Z.
\end{equation}
The pair $i(n),e(n)$ can be chosen solely from the residue class of $n$ modulo
$4r$, and hence independently of $\ell$.

Assume first that $r\nmid nm$.  By \eqref{eq:normal-form},
\[
 F_n+F_m\equiv0\pmod\ell
\]
is equivalent to
\[
 c_\ell^{e(n)}F_{i(n)}=-c_\ell^{e(m)}F_{i(m)}.
\]
Taking fourth powers and using hypothesis~(2) forces $i(n)=i(m)$.  Cancellation
followed by $c_\ell^2=-1$ then gives
\[
 e(n)-e(m)\equiv2\pmod4.
\]
This condition is independent of $\ell$.  Finally,
$F_n\equiv0\pmod\ell$ if and only if $r\mid n$, by hypothesis~(1).  If exactly
one of $n,m$ is divisible by $r$, the sum is nonzero modulo $\ell$; if both
are divisible by $r$, it is zero.  The result follows in every case.
\end{proof}

For $r=31$, the certificates required in Theorem~\ref{thm:criterion} are
\begin{align*}
 \rankapp(557)=\rankapp(2417)=31,
 \qquad &F_{32}\equiv439\pmod{557},\\
 &F_{32}\equiv592\pmod{2417}.
\end{align*}
Moreover, $439^2\equiv-1\pmod{557}$ and
$592^2\equiv-1\pmod{2417}$.  The fourth-power
residues are displayed below; each column has no repetition.
{\scriptsize\setlength{\arraycolsep}{2pt}
\[
\begin{array}{c|rrrrrrrrrrrrrr}
i&2&3&4&5&6&7&8&9&10&11&12&13&14&15\\ \hline
F_i^4\bmod557
&1&16&81&68&197&154&88&93&229&90&533&203&481&19\\
F_i^4\bmod2417
&1&16&81&625&1679&1974&1121&2152&2280&1755&2230&887&2312&1839
\end{array}
\]
}
This proves Theorem~\ref{thm:key31}.

\section{Further counterexamples}\label{sec:further}

The same test gives the following additional pairs.  In every row the two
displayed primes have rank of apparition $r$, their Pisano period is $4r$, and
each satisfies the distinctness condition \eqref{eq:fourth-residues}.
\begin{center}
\begin{tabular}{r@{\qquad}r@{\qquad}r@{\qquad}r}
\toprule
$r$ & $p$ & $q$ & common Pisano period\\
\midrule
31&557&2417&124\\
41&2789&59369&164\\
55&661&474541&220\\
61&4513&555003497&244\\
67&116849&1429913&268\\
73&9375829&86020717&292\\
\bottomrule
\end{tabular}
\end{center}
For example,
\[
 F_{41}=165580141=2789\cdot59369.
\]
Hence replacing $557,2417$ in \eqref{eq:counterexample} by $2789,59369$
produces another counterexample, with
\[
 f(2789)=59369,\qquad f(59369)=2789.
\]
Likewise, $F_{55}=5\cdot89\cdot661\cdot474541$, and the pair
$661,474541$ may be switched.

The condition that two primes divide the same Fibonacci number is not enough.
For instance,
\[
 F_{19}=37\cdot113,\qquad F_9+F_4=34+3=37.
\]
Thus $37\mid F_9+F_4$, whereas $113\nmid F_9+F_4$, so these two primes have
different signatures and cannot be switched by Proposition~\ref{prop:signature}.

\section{AI-assisted discovery and verification}\label{sec:ai}

The initial example was found in September 2026 in an interactive ChatGPT
session using \textbf{OpenAI GPT-6 Astra}.  The model was used in two distinct
ways.
\begin{enumerate}
\item \emph{Structural step.}  It proposed replacing the value at one prime by
the other prime and recognized that multiplicativity is preserved by the
indicator formula in Proposition~\ref{prop:signature}.  This reduced the
problem to finding a collision of prime divisibility signatures.
\item \emph{Computational step.}  It designed and ran an exact-integer search:
factor $F_r$ for odd $r$, retain prime divisors having rank of apparition $r$,
and compare their divisibility patterns on Fibonacci sums.  The pair
$(557,2417)$ arising from $F_{31}$ was the first successful pair retained in
this search.  The modular pattern was then compressed into the fourth-power
criterion of Theorem~\ref{thm:criterion}, which supplied a short proof and
made the further rows above easy to verify.
\end{enumerate}
The language model's output was therefore a discovery aid, not a substitute
for verification.  All claims in this note reduce to exact factorizations and
finite modular computations, and the argument proving that these computations
cover every $n,m$ is given in Theorem~\ref{thm:criterion}.

For reproducibility, the following Python fragment checks the certificate for
any proposed triple $(r,p,q)$.  It uses only exact modular arithmetic; primality
and the displayed factorization should be checked separately when new pairs
are sought.
\begin{verbatim}
def fibmod(n, modulus):
    a, b = 0, 1
    for _ in range(n):
        a, b = b, (a + b) % modulus
    return a

def rank_of_apparition(prime, bound):
    a, b = 0, 1
    for n in range(1, bound + 1):
        a, b = b, (a + b) % prime
        if a == 0:
            return n
    return None

def check_certificate(r, p, q):
    assert r % 2 == 1 and r >= 5
    for ell in (p, q):
        assert rank_of_apparition(ell, r) == r
        c = fibmod(r + 1, ell)
        assert c * c % ell == ell - 1
        residues = [pow(fibmod(i, ell), 4, ell)
                    for i in range(2, (r - 1)//2 + 1)]
        assert len(residues) == len(set(residues))
    return True

examples = [(31, 557, 2417),
            (41, 2789, 59369),
            (55, 661, 474541),
            (61, 4513, 555003497),
            (67, 116849, 1429913),
            (73, 9375829, 86020717)]
for triple in examples:
    assert check_certificate(*triple)
\end{verbatim}

\section{Concluding remarks}

The equation \eqref{eq:additivity} determines $f$ on every Fibonacci number
and on every sum of two Fibonacci numbers, but this set does not distinguish
all prime divisibility signatures.  A collision between two such signatures
leaves room to move multiplicative mass from one prime to the other without
changing any tested value.  This is the general principle behind the examples.

It remains natural to ask whether infinitely many distinct prime pairs satisfy
the criterion of Theorem~\ref{thm:criterion}, or even the weaker identical-
signature condition of Proposition~\ref{prop:signature}.  No infinitude claim
is needed for the counterexamples above.

\end{document}